\newcommand{\be}{\begin{equation}}
\newcommand{\ee}{\end{equation}}
\newcommand{\ve}{{\varepsilon}}
\newcommand{\kp}{{\kappa}}
\numberwithin{equation}{section}
\numberwithin{figure}{section}
\newtheorem{theorem}{Theorem}[section]
\newtheorem{proposition}[theorem]{Proposition}
\newtheorem{remark}[theorem]{Remark}
\newtheorem{lemma}[theorem]{Lemma}
\newtheorem{corollary}[theorem]{Corollary}
\newtheorem{definition}[theorem]{Definition}
\begin{document}
\vglue-1cm \hskip1cm
\title[Periodic Waves for the Klein-Gordon Equations]{Odd Periodic Waves for some Klein-Gordon Type Equations: Existence and Stability}

\begin{center}

\subjclass[2000]{76B25, 35Q51, 35Q70.}

\keywords{Klein-Gordon equations, Periodic waves, Orbital instability, Orbital stability.}

\maketitle

{\bf Guilherme de Loreno}

{Departamento de Matem\'atica - Universidade Estadual de Maring\'a\\
	Avenida Colombo, 5790, CEP 87020-900, Maring\'a, PR, Brazil.}\\
{ pg54136@uem.br}

{\bf F\'abio Natali}

{Departamento de Matem\'atica - Universidade Estadual de Maring\'a\\
Avenida Colombo, 5790, CEP 87020-900, Maring\'a, PR, Brazil.}\\
{ fmanatali@uem.br}

\vspace{3mm}

\end{center}

\begin{abstract}
In this paper, we establish the existence and stability properties of odd
periodic waves related to the Klein-Gordon type equations, which include the well known $\phi^4$ and $\phi^6$ models. Existence of periodic waves is determined by using a general planar theory of ODE. The spectral analysis for the corresponding linearized operator is established using the monotonicity of the period map combined with an improvement of the standard Floquet theory. Orbital stability in the odd sector of the energy space is proved using exclusively the monotonicity of the period map. The orbital instability of explicit solutions for the $\phi^4$ and $\phi^6$ models is presented using the abstract approach in \cite{grillakis1}.
\end{abstract}

\section{Introduction}
The main purpose of this paper is to present orbital stability properties  associated to the generalized Klein-Gordon equation
\begin{equation}\label{gKG}
\phi_{tt}-\phi_{xx}-\phi+\phi^{2k+1}=0,
\end{equation}
where $k\geq1$ is an integer and $\phi:\mathbb{R}\times[0,+\infty)\rightarrow \mathbb{R}$ is a periodic function at the spatial variable, that is, it satisfies $\phi(x+L,t)=\phi(x,t)$, where $(x,t)\in\mathbb{R}\times [0,+\infty)$ and $L>0$ indicates the minimal period of $\phi$. When $k=1$, we have the well known $\phi^4$ equation which plays an important role in nuclear and particle physics. For the case $k=2$, we obtain the $\phi^6$ equation found in the energy transport along the hydrogen-bonded chains. Both models have been studied by several researches along the last years.

Equation $(\ref{gKG})$ has an abstract Hamiltonian form,
\begin{equation}\label{hamilt31}
\Phi_t=JE'(\Phi),\end{equation} where $J$ is given by

\begin{equation}\label{J}
J=\left(\begin{array}{lll}
\ \ 0\ \ \ \  1\\\\
-1\ \ \ \ 0\end{array}\right),
\end{equation}
and $\Phi=(\phi,\psi)=(\phi,\phi_t)$. $E'$ in $(\ref{hamilt31})$ indicates the Fr\'echet derivative of the conserved quantity $E$ defined by

\begin{equation}\label{E}
E(\phi,\psi)
=\displaystyle\frac{1}{2}\int_{0}^{L}\left(\phi_x^2+\psi^2-\phi^2+\frac{\phi^{2k+2}}{k+1}\right)dx.
\end{equation}
Moreover, $(\ref{gKG})$ has another conserved quantity given by,
\begin{equation}\label{F}
F(\phi,\psi)=\displaystyle\int_{0}^{L}\phi_x\psi dx.
\end{equation}

\indent Equation $(\ref{gKG})$ admits kink and periodic traveling wave solutions of the form $\phi(x,t)=h(x-ct)$, where $c\in (-1,1)$ is the wave speed. Indeed, substituting it into the equation $(\ref{gKG})$, we obtain for $\omega=1-c^2>0$, the following ODE given by

\begin{equation}\label{ode1}
-\omega h''-h+h^{2k+1}=0.
\end{equation}
Defining $G(\phi,\psi)=E(\phi,\psi)-cF(\phi,\psi)$, one has by $(\ref{ode1})$ that $G'(h,ch')=0$. In other words, $(h,ch')$ is a critical point of the Lyapunov functional $G$.\\
\indent For the case $k=1$, we have the well known kink solution with hyperbolic tangent profile given by
\begin{equation}\label{tanh1}
h(x)=\tanh\left(\frac{x}{\sqrt{2\omega}}\right).
\end{equation}
\indent Henry, Perez and Werszinski \cite{henry} determined the 
orbital stability of $(\ref{tanh1})$ with respect to small perturbations in the energy space $X:=H_{per}^1([0,L])\times L_{per}^2([0,L])$. Kowalczyk, Martel and Mu\~noz in \cite{martel} proved the asymptotic stability of the kink for odd perturbations in the energy space. The authors based their proof on Virial-type estimates and it has been inspired in some results concerning the asymptotic stability of solitons for the generalized Korteweg-de Vries equations (see \cite{martel-merle2} and \cite{martel-merle1}).\\
\indent In periodic context, we can find an explicit solution depending on the Jacobi elliptic function of snoidal type as
\begin{equation}\label{SNsol1}
h(x)=\frac{\sqrt{2}\kp}{\sqrt{\kp^2+1}}{\rm sn}\left(\frac{4K(\kp)x}{L},\kp\right),
\end{equation} 
where $\kp\in(0,1)$ is called modulus of the elliptic function and $K(\kp)$ is the complete  elliptic integral of first kind. A nice result regarding periodic waves has been determined by Palacios \cite{palacios}, where the author used the abstract theory in \cite{grillakis1} adapted to the periodic context. However, the author performed the stability result by considering the Ginzburg-Landau energy given by
\begin{equation}\label{GLE}
\widetilde{E}(\phi,\psi)
=\displaystyle\frac{1}{2}\int_{0}^{L}\left(\phi_x^2+\psi^2+\left(1-\frac{\phi^{2}}{2}\right)^2\right)dx.
\end{equation}
As far as we can see, the stability analysis in the periodic context is the same if one considers our energy functional given by $(\ref{E})$ for the case $k=1$.\\ 
\indent Concerning the case $k=2$, kink solutions with hyperbolic tangent profile is given by
\begin{equation}\label{phitahn}
h(x)=\sqrt{\frac{2}{3}}\frac{{\rm tanh}\left(\frac{x}{\sqrt{\omega}}\right)}{\sqrt{1-\frac{1}{3}{\rm tanh}^2\left(\frac{x}{\sqrt{\omega}}\right)}}.
\end{equation}
Unfortunately, we do not know references concerning the orbital/asymptotic  stability for this model, but we believe that arguments in \cite{henry} can be repeated in order to get the orbital stability in the energy space $X$.\\
\indent Periodic waves with snoidal profile associated to the model $(\ref{ode1})$ are given by
\begin{equation}\label{SNSNso2}
h(x)=\frac{a{\rm sn}\left(\frac{4K(\kappa)x}{L},\kappa\right)}{\sqrt{1-b{\rm sn}^2\left(\frac{4K(\kappa)x}{L},\kappa\right)}},
\end{equation}
where $a$ and $b$ are complicated functions depending only on  the modulus $\kp\in (0,1)$ (see the definition of $a$ and $b$ in $(\ref{eqaSN})$ and $(\ref{bSNSN})$, respectively). The value of $\omega$ depends on $\kp$ and $L$ and it is expressed by
\begin{equation}\label{omegaphi6}
\omega=\frac{L^2}{16K(\kp)^2\sqrt{\kp^4-\kp^2+1}}.
\end{equation}
\indent  According with \cite{grillakis1} and since $J$ in $(\ref{J})$ is invertible with bounded inverse, the first requirement to get the orbital stability/instability of periodic waves for $(\ref{gKG})$, is to construct a smooth curve $\omega\in \mathcal{I}\subset(0,+\infty)\mapsto h_{\omega}\in H_{per}^2([0,L])$ with fixed period solving $(\ref{ode1})$. The second claim is to prove that
\begin{equation}
\displaystyle G''(h,ch')\equiv\mathcal{L}_{KG}=\left(
\begin{array}{lll}
-\partial_x^2-1+(2k+1)h^{2k}& &c\partial_x\\\\
\ \ \ \ -c\partial_x & & 1
\end{array}\right)
\label{matrixop313}\end{equation}
has only one negative eigenvalue which is simple and zero is a simple eigenvalue associated to the eigenfunction $(h',ch'')$. To do so, it makes necessary to combine the min-max characterization of eigenvalues with the study of the first non-positive eigenvalues associated to the single linear operator given by
\begin{equation}\label{linoperat}
\mathcal{L}=-\omega\partial_x^2-1+(2k+1)h^{2k}.
\end{equation}
 \indent Defining $d(c)=E(h,ch')-cF(h,ch')=G(h,ch')$, since $\omega=1-c^2$ and $\omega\mapsto h$ is a smooth curve of periodic waves with fixed period, we have from $G'(h,ch')=0$ that $d'(c)=-cF(h,ch')$. In this setting, if $d''(c)<0$, we have the orbital instability, and if $d''(c)>0$, the orbital stability. Here, $d''(c)$ is given by
   \begin{equation}\label{dsegunda}d''(c)=-\int_0^{L}(h'(x))^2dx+2(1-\omega)\frac{d}{d\omega}\int_0^{L}(h'(x))^2dx.\end{equation} 
\indent In \cite{palacios}, the author uses all requirements above to establish the orbital instability of the periodic wave $(\ref{SNsol1})$ in the energy space $X$. The crucial point to get the second requirement was the exact determination of the instability intervals associated to the Lam\'e equation
\begin{equation}\label{lame}
\left\{
\begin{array}{lll}
\displaystyle\frac{d^2}{dx^2}\Psi+[\rho-6\kp^2sn^2(x;\kp)]\Psi=0\\
\Psi(0)=\Psi(4K(\kp)),\;\;\Psi'(0)=\Psi'(4K(\kp)).
\end{array}
\right.
\end{equation}
 According with our best knowledge, for $k\neq1$, the linearized operator associated to the solution in $(\ref{SNSNso2})$ can not be reduced in a Lam\'e equation $(\ref{lame})$ and then, the arguments in \cite{palacios} can not be used to prove the orbital stability/instability related to the $\phi^6$ model.\\
 \indent To overcome this difficulty, we use a general planar analysis of ODE to deduce the existence of periodic solutions for the general equation $(\ref{gKG})$. In addition, let us define the period map $L=L(B)$ in terms of $B\in (0,B_{\omega})$ as 
\begin{equation}L:(0,B_{\omega})\mapsto \int_{\Gamma_B}\frac{dh}{\xi},
\label{permap}\end{equation}
where $B$ is the energy level of $\mathcal{E}(h,\xi)=\frac{\xi^2}{2}+\frac{h^2}{2\omega}-\frac{h^{2k+2}}{(2k+2)\omega}=B$, $\xi=h'$, $B_{\omega}=\frac{k}{2\omega(k+1)}$, and $\Gamma_B$ stands the orbit in the phase portrait corresponding to the periodic solution $h$ of $(\ref{ode1})$. Our analysis establishes that $L_B>0$ and this fact can be used to determine the required spectral properties associated to the linear operator $\mathcal{L}$ in $\ref{linoperat}$  for all $k\geq1$ integer. This last fact can be determined employing an improvement of the Floquet theory as in \cite{natali1}, \cite{neves} and \cite{neves1}. \\
\indent We present our main results. Without using explicit solutions, we  establish for the case $c=0$, the stability of periodic waves for the general equation $(\ref{gKG})$ in the odd sector $X_{odd}=H_{odd}^1([0,L])\times L_{odd}^2([0,L])$. For the case $k=1$, our paper improves the arguments in \cite{palacios} since it has been used the explicit solution in $(\ref{SNsol1})$ and the Lam\'e equation in $(\ref{lame})$ to obtain the stability in the odd sector. We use the monotonicity of the period map $L$ in $(\ref{permap})$ to obtain the following basic property 
\begin{equation}\label{quadraticQKG1}\langle\mathcal{L}_{KG}(u,v),(u,v)\rangle_X\geq \gamma||(u,v)||_{X}^2,\end{equation}
where $\gamma>0$. Inequality in $(\ref{quadraticQKG1})$ is the cornerstone to establish that $(h,0)$ is minimum point of the energy $E$ in $(\ref{E})$ for fixed values of the constraint $F$ in $(\ref{F})$. Thus, the abstract theory in \cite{grillakis1} can be applied to obtain the stability in $X_{odd}$.\\
\indent Concerning the $\phi^4$ and $\phi^6$ models with explicit solutions given by $(\ref{SNsol1})$ and $(\ref{SNSNso2})$, respectively. We see that our planar analysis of ODE gives us that $L_B>0$ for any $k\geq1$. Thus, according with \cite{neves}, \cite{neves1} and the min-max characterization of eigenvalues, we obtain that $\mathcal{L}_{KG}$ has only one negative eigenvalue which is simple and zero is a simple eigenvalue whose associated eigenfunction is $(h',ch'')$. The orbital instability is then proved by establishing that $d''(c)<0$. For the $\phi^4$, we calculate $d''(c)$ using the expression in $(\ref{dsegunda})$ and the explicit solution in $(\ref{SNsol1})$. Regarding the $\phi^6$ model, we use the explicit solution in $(\ref{SNSNso2})$ combined with simplified formula for $d''(c)$ to avoid heavy expressions containing the complete elliptic integral of third kind (see \cite{byrd}).\\
\indent Our paper is organized as follows. Section 2 is devoted to present the existence of periodic waves for the general equation $(\ref{gKG})$. Spectral analysis for the linearized operator $\mathcal{L}_{KG}$ is established in Section 3. The stability in the odd sector $X_{odd}$ for the general solution of $(\ref{ode1})$ as well as the orbital instability in $X$ for the $\phi^4$ and $\phi^6$ models are presented in Section 4.

\section{Existence of odd periodic waves and the monotonicity of the period-map.}
Our purpose in this section is to present some facts concerning the existence of
periodic solutions for the nonlinear ODE written as
\be
 - \omega h'' -h + h^{2k+1} = 0, \label{ode}
\ee
where $\omega>0$ and $k\geq 1$ is an integer. 

 It is well known that equation (\ref{ode}) is
conservative, and thus its solutions are contained on the level
curves of the energy
\begin{equation}\label{energyODE}
\mathcal{E}(h, \xi) :=  \frac{\xi^2}{2} +\frac{h^2}{2\omega}-\frac{h^{2k+2}}{(2k+2)\omega},
\end{equation}
where $\xi=h'$.\\
\indent According to the classical ODE theory (see \cite{chicone}, \cite{jack} and \cite{natali1} for further details), $h$ is a periodic solution of the equation $(\ref{ode})$ if, and only if, $(h,h')$ is a periodic orbit of the planar differential system
\begin{equation}\label{planarODE}
\left\{\begin{array}{lllll}
h'=\xi,\\\\
\xi=\displaystyle-\frac{h}{\omega}+\frac{h^{2k+1}}{\omega}.
\end{array}\right.
\end{equation}
\indent The periodic orbits associated to the equation $(\ref{planarODE})$ lies inside of convenient energy levels of the energy $\mathcal{E}$. This means that the pair $(h,h')$ satisfies the equation $\mathcal{E}(h,h')=B$ for all $B\in(0,B_{\omega})$, where $B_{\omega}=\frac{k}{2\omega(k+1)}$. Moreover, the periodic orbits turn round at the center critical point of the system $(\ref{planarODE})$. In our case, we see that $(\ref{planarODE})$ has three critical points, being one center point at $(h,h')=(0,0)$, and two saddle points at $(h,h')=(\pm 1,0)$. According to the standard ODE theory, the periodic orbits emanate from the center point to the separatrix curve which is represented by a smooth solution $\widetilde{h}:\mathbb{R}\rightarrow\mathbb{R}$ of $(\ref{ode})$ satisfying $\lim_{x\rightarrow \pm \infty}\widetilde{h}(x)=\pm 1$ and $\widetilde{h}'(x)>0$ for all $x\in\mathbb{R}$.\\
\indent From the analysis above, the periodic orbits of the planar system $(\ref{planarODE})$ correspond to odd periodic solutions $h$ of $(\ref{ode})$. The period $L=L(\omega,B)$ of the solution $h$ can be expressed (formally) by

\be
L=\displaystyle 2\int_{b_1}^{b_2}\frac{dh}{\sqrt{\frac{h^{2k+2}}{\omega(k+1)}-\frac{h^2}{\omega}+2B}},
\label{persol}\ee
where $b_1=\min_{x\in [0,L]}h(x)$ and $b_2=\max_{x\in [0,L]}h(x)$.\\
\indent On the other hand, the energy levels of the first integral $\mathcal{E}$ in $(\ref{energyODE})$ parametrize the set of periodic orbits $\{\Gamma_B\}_{B\in(0,B_{\omega})}$ which emanate from the center point to the separatrix curve. Thus, we can conclude that the set of smooth periodic solutions of $(\ref{ode})$ can be expressed by a smooth family $h=h_{\omega,B}$ which is parametrized by $\omega>0$ and $B\in(0,B_{\omega})$. Thus, we obtain that the orbit $\Gamma_B$ is equal to the corresponding periodic solution $h$. In addition, since the period of the orbit is given by the smooth map
\be
\widetilde{L}=\int_{\Gamma_B}\frac{dh}{\xi},
\label{perorbit}
\ee
we see from $(\ref{persol})$ and $(\ref{perorbit})$ that $L=\widetilde{L}$. \\
\indent Moreover, if $B\rightarrow 0$ we have $L\rightarrow  \alpha(\omega)$, and if $B\rightarrow B_{\omega}$, one has $L\rightarrow +\infty$, where $\alpha(\omega)>0$ is the corresponding period of the equilibrium center point. Next result gives us the monotonicity of the period-map $L$ defined in $(\ref{perorbit})$.

\begin{lemma} For every $\omega>0$ and $k\geq1$ integer, the funtion
	$$L:(0,B_{\omega})\rightarrow \mathbb{R}_{+},\ \ \ \ B\mapsto \int_{\Gamma_B}\frac{dh}{\psi}$$
	 satisfies $L_{B}>0$.
	
\label{lemaper1}\end{lemma}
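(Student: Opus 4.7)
The plan is to reduce the period integral to an explicit one-parameter integral by rescaling to the turning point. Let $b=b(B)\in(0,1)$ denote the positive root of $V(h)=B$, where the potential $V(h)=\frac{h^2}{2\omega}-\frac{h^{2k+2}}{(2k+2)\omega}$ is read off from $\mathcal{E}$ in (2.2). Since $V$ is even, the orbit $\Gamma_B$ is symmetric with respect to the $h$-axis, and
\begin{equation*}
L(B) \;=\; 2\int_{-b}^{b}\frac{dh}{\sqrt{2(B-V(h))}}.
\end{equation*}
I would then substitute $h = bu$. Because $V$ contains only the monomials $h^2$ and $h^{2k+2}$, the constraint $B=V(b)$ allows me to factor $(1-u^2)$ out of the radicand:
\begin{equation*}
B - V(bu) \;=\; \tfrac{b^2}{2\omega}(1-u^2)\bigl[\,1-\lambda\, P_k(u^2)\bigr], \qquad \lambda := \tfrac{b^{2k}}{k+1}, \qquad P_k(s):=\sum_{j=0}^{k} s^j,
\end{equation*}
using the geometric identity $1-u^{2k+2}=(1-u^2)P_k(u^2)$. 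The explicit $b$-factors cancel in the integral, leaving
\begin{equation*}
L(B) \;=\; 2\sqrt{\omega}\int_{-1}^{1}\frac{du}{\sqrt{(1-u^2)\,(1-\lambda\, P_k(u^2))}}.
\end{equation*}

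Next I would check that $B\mapsto \lambda(B)$ is a strictly increasing $C^{1}$ diffeomorphism onto a subinterval of $(0,1/(k+1))$. This is routine: from $V'(h)=\omega^{-1}h(1-h^{2k})>0$ on $(0,1)$, the implicit function theorem yields $b_{B}>0$, hence $\lambda_{B}>0$; the upper bound $\lambda<1/(k+1)$ follows from $b<1$. Consequently $1-\lambda P_k(u^2)>0$ on $(-1,1)$ (since $P_k(u^2)<k+1$ strictly there), so the integrand is smooth in $\lambda$ with a fixed, integrable $(1-u^{2})^{-1/2}$ singularity at the endpoints, independent of $\lambda$.

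By the chain rule it then suffices to show $dL/d\lambda>0$, for $L_B = (dL/d\lambda)\,\lambda_{B}>0$ will follow at once. Differentiating under the integral sign I obtain
\begin{equation*}
\frac{dL}{d\lambda} \;=\; \sqrt{\omega}\int_{-1}^{1}\frac{P_k(u^2)\,du}{\sqrt{1-u^2}\,(1-\lambda P_k(u^2))^{3/2}} \;>\; 0,
\end{equation*}
since the integrand is pointwise positive. The only real obstacle is the factorization of $B-V(bu)$; it succeeds here precisely because the nonlinearity is a single power $h^{2k+2}$ in the potential, so one instance of $1-u^{2k+2}=(1-u^2)P_k(u^2)$ exposes the monotonicity. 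This also explains why no case distinction in $k$ is required: the proof is uniform across all integers $k\geq 1$.
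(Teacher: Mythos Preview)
Your argument is correct. The factorization $B-V(bu)=\tfrac{b^{2}}{2\omega}(1-u^{2})\bigl[1-\lambda P_{k}(u^{2})\bigr]$ is exactly right, and since $1-\lambda P_{k}(u^{2})\ge 1-\lambda(k+1)>0$ uniformly on $[-1,1]$ for $\lambda$ in compact subsets of $(0,1/(k+1))$, the endpoint singularity is the fixed integrable $(1-u^{2})^{-1/2}$ and differentiation under the integral sign is legitimate. The chain rule then gives $L_{B}=(dL/d\lambda)\,\lambda_{B}>0$.

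Your route is genuinely different from the paper's. The paper does not manipulate the period integral directly; instead it invokes the criterion of Bonorino~et~al.\ (their reference~[1]), which says that $L_{B}>0$ whenever the auxiliary function
\[
I(h)=\frac{F'(h)^{2}-2F(h)F''(h)}{F'(h)^{3}},\qquad F'(h)=h-h^{2k+1},
\]
is strictly increasing on $(-1,1)$. They then compute $I$ and $I'$ explicitly and bound $I'>0$ by an ad~hoc inequality. That approach has the merit of being a general-purpose tool applicable to other potentials, but it requires citing an external result and carrying out a somewhat opaque derivative computation. Your rescaling argument is self-contained and more transparent: the single-power structure of the potential is exactly what makes the geometric identity $1-u^{2k+2}=(1-u^{2})P_{k}(u^{2})$ work, and once the period is written as a monotone function of the single scalar $\lambda$ the result is immediate. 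The trade-off is that your argument is tailored to this specific nonlinearity, whereas the Bonorino criterion would survive perturbations of the potential.
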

\begin{proof}
	Consider $f(h)=h-h^{2k+1}$ and $F$ satisfying $F'=f$. According to the  \cite[Section 4]{bonorino}, it suffices to prove that
	$$I(h)=\frac{F'(h)^2-2F(h)F''(h)}{F'(h)^3}$$
	is a strictly increasing function over the interval $(-1,1)$. Doing the computations, we obtain that
	
	$$I(h)=-kh^{2k-1}\frac{1+2k-h^{2k}}{(1+k)(h^{2k}-1)^3},$$
	that is, $I$ is smooth in the interval $(-1,1)$. In addition, an exhaustive calculation gives us
	
	$$\begin{array}{lll}I'(h)&=& -\displaystyle \frac{kh^{2k-2}}{(k+1)(h^{2k}-1)^4}\left[(-2k-8k^2-2)h^{2k}+(2k+1)h^{4k}+1-4k^2\right]\\\\
	&\geq&-\displaystyle\frac{kh^{2k-2}}{(k+1)(h^{2k}-1)^4}(-4k^2-2)h^{2k}>0.
	\end{array}$$
\end{proof}

\section{Spectral Properties.}
\subsection{Floquet Theory Framework}
We need to recall some basic facts concerning Floquet's theory (see \cite{est} and \cite{magnus}). Let $Q$ be a smooth $L$-periodic function.  Consider $\mathcal{P}$ the Hill operator defined in $L_{per}^2([0,L])$, with domain $D(\mathcal{P})=H_{per}^2([0,L])$, given by
$$
\mathcal{P}=-\partial_x^2+Q(x).
$$
The spectrum of $\mathcal{P}$  is formed by an unbounded sequence of
real eigenvalues
\[
\lambda_0 < \lambda_1 \leq \lambda_2 \leq \lambda_3 \leq \lambda_4\leq
\cdots\; \leq \lambda_{2n-1} \leq \lambda_{2n}\; \cdots,
\]
where equality means that $\lambda_{2n-1} = \lambda_{2n}$  is a
double eigenvalue. Moreover, according with the Oscillation Theorem, the spectrum  is characterized by the number of zeros
of the eigenfunctions as: if $p$ is an eigenfunction associated to either $\lambda_{2n-1}$ or $\lambda_{2n}$, then $p$  has exactly
$2n$ zeros in the half-open
interval $[0, L)$. In particular, the even eigenfunction associated to the first eigenvalue $\lambda_0$ has no zeros in $[0, L]$.

Let $p(x)$ be a nontrivial $L$-periodic solution of the equation
\begin{equation}\label{zeqL}
\mathcal{P}f\equiv-f''+Q(x)f=0.
\end{equation}
Consider $y(x)$ the another solution of \eqref{zeqL} linearly independent of $p(x)$.  There exists a constant $\theta$ (depending on $y$ and $p$) such that (see \cite[page 5]{magnus})
\be\label{theta1}
y(x+L)=y(x)+\theta p(x).
\ee
Consequently, $\theta=0$ is a necessary and sufficient condition to all solutions of \eqref{zeqL} be $L$-periodic. This criterion is very useful to establish if the kernel of $\mathcal{P}$ is one-dimensional.

Next, for a fixed $\omega>0$ and $k\geq1$ integer, let $h=h_{\omega,B}$ be any periodic solution of \eqref{ode}. Consider $\mathcal{L}_{\omega,B}$ the linearized operator arising from the linearization of \eqref{ode} at $h=h_{\omega,B}$, that is,
\begin{equation}\label{operator}
\mathcal{L}_{\omega,B}(y):=-\omega y''-y+(2k+1)h^{2k}y.
\end{equation}
We see that $\mathcal{P}$ and $\mathcal{L}_{\omega,B}$ can be related as $\mathcal{P}=\frac{1}{\omega}\mathcal{L}_{\omega,B}$ and $Q(x)=-\frac{1}{\omega}+\frac{2k+1}{\omega}h^{2k}$. By taking the derivative with respect to $x$ is \eqref{ode}, we see that $h'$ belongs to the kernel of the operator  $\mathcal{L}_{\omega,B}$. In addition, from our construction, $h'$ has exactly
two zeros in the half-open interval $[0,L )$, which implies that zero is the second or the third eigenvalue of  $\mathcal{L}_{\omega,B}$. The next result gives that it is possible to decide the exact position of the zero eigenvalue by knowing the precise sign of $\theta$ in $(\ref{theta1})$.

\begin{lemma}\label{specprop}
Assume that $\theta$ in $(\ref{theta1})$ satisfies $\theta<0$. The operator $\mathcal{L}_{\omega,B}$, defined in $L_{per}^2([0,L])$, with domain $H_{per}^2([0,L])$, has exactly one negative eigenvalue, a simple eigenvalue at zero and the rest of the spectrum is positive and bounded away from zero.
\end{lemma}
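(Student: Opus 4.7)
The plan is to combine three ingredients: the oscillation theorem recalled above, the criterion immediately following \eqref{theta1} characterizing one-dimensionality of $\ker\mathcal{L}_{\omega,B}$, and the refinement of Floquet theory of \cite{neves}, \cite{neves1} that reads off the location of $0$ in the spectrum from the sign of $\theta$.

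First, differentiating \eqref{ode} with respect to $x$ shows that $\mathcal{L}_{\omega,B}h'=0$, so $h'\in\ker\mathcal{L}_{\omega,B}$. Since $h=h_{\omega,B}$ corresponds to the closed orbit $\Gamma_{B}$ winding once around the center $(0,0)$, the function $h$ attains its maximum and its minimum exactly once on $[0,L)$. Hence $h'$ has precisely two zeros on $[0,L)$, and by the oscillation theorem this forces $0\in\{\lambda_{1},\lambda_{2}\}$.

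To upgrade this to simplicity of the zero eigenvalue, I would apply \eqref{theta1} with $p=h'$ and $y$ any solution of $\mathcal{L}_{\omega,B}f=0$ linearly independent of $p$. The hypothesis $\theta<0$ in particular gives $\theta\neq 0$, so not every solution of $\mathcal{L}_{\omega,B}f=0$ is $L$-periodic. By the criterion stated just after \eqref{theta1}, $\dim\ker\mathcal{L}_{\omega,B}=1$, and therefore $\lambda_{1}<\lambda_{2}$ with zero a simple eigenvalue.

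The main obstacle is deciding whether $0=\lambda_{1}$ (only one negative eigenvalue) or $0=\lambda_{2}$ (two negative eigenvalues). This is resolved by the sign criterion from \cite{neves}, \cite{neves1}: when the zero eigenfunction has $2n$ zeros in $[0,L)$, one has $0=\lambda_{2n-1}$ if $\theta<0$ and $0=\lambda_{2n}$ if $\theta>0$. The underlying mechanism is that differentiating the Hill discriminant $\Delta(\mu)$ of $\mathcal{L}_{\omega,B}-\mu$ at $\mu=0$ yields $\operatorname{sgn}\theta=-\operatorname{sgn}\Delta'(0)$ up to a positive factor depending only on $p$ and $y$, which places $\mu=0$ on a specific endpoint of the instability interval $(\lambda_{1},\lambda_{2})$. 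With $2n=2$, the hypothesis $\theta<0$ therefore yields $0=\lambda_{1}$. Consequently $\mathcal{L}_{\omega,B}$ has a single simple negative eigenvalue $\lambda_{0}$, a simple eigenvalue at $0$ with eigenfunction $h'$, and all remaining eigenvalues lie in $[\lambda_{2},+\infty)$ with $\lambda_{2}>0$. Since $\mathcal{L}_{\omega,B}$ has compact resolvent on $L^{2}_{per}([0,L])$, its spectrum is discrete and the positive part is bounded away from zero by $\lambda_{2}$.
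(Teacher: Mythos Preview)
Your argument is correct and follows the same route as the paper, which simply cites Theorem~3.1 in \cite{neves}: you have merely unpacked that citation by spelling out how the oscillation theorem places $0\in\{\lambda_1,\lambda_2\}$, how $\theta\neq0$ forces simplicity via the criterion after \eqref{theta1}, and how the sign rule from \cite{neves}, \cite{neves1} then pins down $0=\lambda_1$.
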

\begin{proof}
See Theorem 3.1 in \cite{neves}.
\end{proof}

Next, let $\bar{y}$ be the unique solution of the initial-value problem
\be \left\{
\begin{array}{l}
  -\omega\bar{y}'' -\bar{y}+(2k+1)h^{2k}\bar{y} = 0 ,\\
 \bar{y}(0) =0, \\
\bar{y}'(0)=  \frac{1}{ h'(0)},
 \end{array} \right.
\label{y}
\ee
Since $h'$ is an $L$-periodic solution of the equation in \eqref{y} and the Wronskian of $\bar{y}$ and $h'$ is 1, there is a constant $\theta=\theta_{\bar{y}}$ such that
\begin{equation}\label{y1}
\bar{y}(x+L)=\bar{y}(x)+\theta h'(x).
\end{equation}
By taking the derivative in this last expression and evaluating at $x=0$, we obtain
\be \label{theta}
\theta= \frac{
\bar{y}(L)}{h'(0)}.
\ee
Now we can give the equality between $L_B$ and $\theta$.

\begin{lemma} \label{teo1}
We have, $L_B=-\theta$, where $\theta$ is the constant in \eqref{theta}.
\end{lemma}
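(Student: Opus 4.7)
The plan is to identify $\bar{y}$ with the derivative of the periodic solution $h=h_{\omega,B}$ with respect to the energy level $B$, and then extract $L_B$ from the failure of $\bar{y}$ to be $L$-periodic. First I would differentiate the ODE $-\omega h''-h+h^{2k+1}=0$ with respect to the parameter $B$. Since $h$ depends smoothly on $B$ (by the smoothness of the family of periodic orbits established in Section~2), the function $h_B:=\partial h/\partial B$ satisfies
\[
-\omega h_B''-h_B+(2k+1)h^{2k}h_B=0,
\]
i.e.\ $\mathcal{L}_{\omega,B}h_B=0$.

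Next I would compute the initial data of $h_B$ at $x=0$. Because the periodic solution is odd and we are parametrizing by $B$ with the normalization $h(0,B)=0$, one has $h_B(0)=0$. From the energy identity $\mathcal{E}(h,h')=B$ evaluated at $x=0$, we obtain $h'(0)^2/2=B$, so differentiating in $B$ gives $h'(0)\,\partial_B h'(0)=1$, hence $h_B'(0)=1/h'(0)$. These are precisely the initial conditions defining $\bar{y}$ in \eqref{y}, so by uniqueness of solutions to the linear ODE, $\bar{y}\equiv h_B$.

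The remaining step is to relate $\bar{y}(L)=h_B(L)$ to $L_B$. I would exploit the period relation $h(x+L(B),B)=h(x,B)$ and differentiate it with respect to $B$ at $x=0$:
\[
h'(L(B),B)\,L_B(B)+h_B(L(B),B)=h_B(0,B)=0.
\]
Since $h'$ is itself $L$-periodic, $h'(L)=h'(0)$, whence $h_B(L)=-h'(0)\,L_B$. Substituting into \eqref{theta} yields
\[
\theta=\frac{\bar{y}(L)}{h'(0)}=\frac{h_B(L)}{h'(0)}=-L_B,
\]
which is the desired identity.

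The main conceptual step — and the only delicate one — is recognizing that the right object to study is $\partial h/\partial B$; after that, the proof reduces to two short calculations: matching initial data via the energy conservation (which pins down $h_B'(0)=1/h'(0)$), and differentiating the periodicity relation (which converts $h_B(L)$ into $-h'(0)L_B$). No heavy computation is required once these two observations are made.
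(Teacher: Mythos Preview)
Your proof is correct and follows essentially the same approach as the paper: both identify $\bar{y}$ with $\partial h/\partial B$, use the energy identity $\mathcal{E}(h,h')=B$ at $x=0$ (together with $h(0)=0$) to match the initial data $h_B(0)=0$, $h_B'(0)=1/h'(0)$, and then differentiate the periodicity condition to obtain $h'(L)L_B+h_B(L)=0$, from which $\theta=\bar{y}(L)/h'(0)=-L_B$. The only cosmetic difference is that the paper differentiates the scalar relation $h(L(B),B)=0$ (using odd periodicity) rather than the full periodicity relation $h(x+L(B),B)=h(x,B)$, but at $x=0$ these are equivalent.
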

\begin{proof}
Consider $\bar{y}$ and $h'$ as above. Since $h$ is odd and periodic one has $h(0)=h(L)=0$. Thus, the smoothness of $h$ in terms of the parameter $B$ enables us to take the derivative of $h(L)=0$ with respect to $B$ to obtain 
\begin{equation}\label{eq12343}
h'(L)L_B+\frac{\partial h(L)}{\partial B}=0.\end{equation}
\indent Next, we turn back to equation $(\ref{ode})$ and multiply it by $h'$ to deduce, after integration, the quadrature form
\begin{equation}\label{eq12344}
\frac{\omega h'^2(x)}{2}+\frac{h(x)^2}{2}-\frac{h(x)^{2k+2}}{2k+2}=\omega B,\ \ \ \ \mbox{for all}\ x\in [0,L].
\end{equation}
Deriving equation $(\ref{eq12344})$ with respect to $B$ and taking $x=0$ in the final result, we obtain from $(\ref{ode})$ that 
$\frac{\partial h'(0)}{\partial B}=\frac{1}{ h'(0)}$. In addition, since $h$ is odd one has that $\frac{\partial h}{\partial B}$ is also odd and thus $\frac{\partial h(0)}{\partial B}=0$. The existence and uniqueness theorem for ordinary differential equations applied to the problem $(\ref{y})$ enables us to deduce that $\bar{y}=\frac{\partial h}{\partial B}$. Therefore, we can combine $(\ref{theta})$ with $(\ref{eq12343})$ to obtain that $L_B=-\theta$. 
\end{proof}

Next, under our assumptions, $\theta$ does not change sign when $\omega$ and $B$ vary. Given any periodic solution $h=h_{\omega, B}$, we denote by $\theta_{\omega,B}=\theta_{\bar{y}_{\omega,B}}$ its corresponding constant as in \eqref{y1}.

\begin{definition}\label{defi12}
Let $Q$ be an smooth $L$-periodic function. Let $\mathcal{P}$ be the Hill operator defined in $L_{per}^2([0,L])$ with domain $D(\mathcal{P})=H_{per}^2([0,L])$ given by
$$
\mathcal{P}=-\partial_x^2+Q(x).
$$
 The inertial index of $\mathcal{P}$, denoted by  $in(\mathcal{P})$ is the pair $(n, z)$, where $n$ denotes the dimension of the negative
subspace of $\mathcal{P}$ and $z$ denotes the dimension of  $\ker(\mathcal{P})$.
\end{definition}

\begin{definition}\label{defi1234}
Assume that $Q(x)=Q_{\omega}(x)$ is periodic and depends on the parameter $\omega>0$.
 The family of linear operators
 $\mathcal{P}_{\omega}:=-\partial_x^2+Q_{\omega}(x)$, is said to be isoinertial if
 $in(\mathcal{P}_{\omega})$ is constant for any $\omega>0$.
\end{definition}

\begin{proposition} \label{teoisoL}
Let $h_{\omega,B}$ be the family of solution determined in previous section. Then the
family of linear operators $\mathcal{L}_{\omega,B}=-\omega\partial_x^2-1+(2k+1)h^{2k}$, is isoinertial. In particular, if $\theta_{\omega_0,B_0}<0$ for some $(\omega_0,B_0)$ with $B_0\in(0,B_{\omega_0})$, then $\theta_{\omega,B}<0$ for any $(\omega,B)\in(0,+\infty)\times(0,B_{\omega})$.
\end{proposition}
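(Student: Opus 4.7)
My plan is to combine continuous dependence of the Hill spectrum on parameters with the Floquet criterion (already recalled before Lemma 3.2) that tells us exactly when zero becomes a double eigenvalue. The underlying parameter set is $\Omega:=\{(\omega,B):\omega>0,\ 0<B<B_\omega\}$; since $B_\omega=k/[2\omega(k+1)]$ is a continuous positive function of $\omega$, $\Omega$ is open and connected in $\mathbb{R}^2$.

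First I would rescale the spatial variable to a fixed reference interval (say $[0,1]$) so as to absorb the dependence of the period $L(\omega,B)$ on the parameters into the coefficients. Because $h_{\omega,B}$ depends smoothly on $(\omega,B)$ (Section 2), the rescaled family $\widetilde{\mathcal{L}}_{\omega,B}$ has a potential varying continuously in $C([0,1])$, and standard perturbation theory for self-adjoint operators then produces continuous dependence of each eigenvalue $\lambda_j(\omega,B)$ on $(\omega,B)$. Continuity of $\theta_{\omega,B}$ itself follows from the formula $\theta=\bar{y}_{\omega,B}(L)/h'(0)$ together with continuous dependence of ODE solutions on parameters.

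Next I would exploit the Floquet equivalence: $\dim\ker\mathcal{L}_{\omega,B}=2$ if and only if $\theta_{\omega,B}=0$, while $\dim\ker\mathcal{L}_{\omega,B}=1$ (spanned by $h'$) whenever $\theta_{\omega,B}\neq 0$. On the open subset $\{\theta\neq 0\}\subset\Omega$ the kernel component $z$ of the inertial index is therefore identically $1$. The Morse index $n$ can change only if some eigenvalue crosses zero, which would force an additional independent vector into the kernel — impossible when $\theta\neq 0$. Hence $(n,z)$ is locally constant on $\{\theta\neq 0\}$.

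Finally, Lemmas 2.1 and 3.3 yield $\theta_{\omega,B}=-L_B<0$ throughout $\Omega$, so $\{\theta\neq 0\}=\Omega$ and the inertial index is globally constant — this establishes isoinertiality. The \emph{in particular} clause is then immediate: if $\theta_{\omega_0,B_0}<0$, Lemma 3.2 gives inertial index $(1,1)$ at that base point; isoinertiality forces $z\equiv 1$ on $\Omega$, i.e., $\theta$ never vanishes on $\Omega$, and continuity of $\theta$ on the connected set $\Omega$ pins the sign as negative throughout. The main technical obstacle I anticipate is the rescaling step: one must verify that the $C([0,1])$-continuity of the rescaled potential upgrades to the type of continuity needed for standard perturbation statements about Hill eigenvalues; once that is settled, the rest is a clean continuity-plus-connectedness argument.
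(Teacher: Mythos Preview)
Your argument is correct and more self-contained than the paper's proof, which consists only of the citation ``See Theorem~3.1 in \cite{natali1}.'' The route, however, differs in logical structure. Once you invoke $L_B>0$ (Lemma~\ref{lemaper1}) together with $\theta=-L_B$ (Lemma~\ref{teo1}) to obtain $\theta_{\omega,B}<0$ throughout $\Omega$, Lemma~\ref{specprop} already yields $in(\mathcal{L}_{\omega,B})=(1,1)$ at \emph{every} point directly; the eigenvalue-continuity apparatus you assemble in your first two paragraphs is therefore never actually used, and the ``in particular'' clause is obtained \emph{before} isoinertiality rather than deduced from it. By contrast, the result cited from \cite{natali1} establishes isoinertiality as an abstract property of such parameter-dependent Hill families, so that the global sign of $\theta$ genuinely follows from its sign at a single base point --- the logical order the proposition's wording suggests. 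Your approach is quicker for this specific model but collapses two layers (abstract isoinertiality and the model-specific monotonicity of the period map) into one; if you want to match the paper's intended separation, you would need an argument for the constancy of $(n,z)$ that does not already presuppose $\theta\neq 0$ on all of $\Omega$. The rescaling concern you flag at the end is not a real obstacle: $C^0$ continuity of the rescaled potential in the parameters is enough for continuity of the Hill eigenvalues via the standard min--max characterization.
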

\begin{proof}
See Theorem 3.1 in \cite{natali1}.
\end{proof}

\begin{remark}\label{obs123}
Theorem \ref{teoisoL} establishes that in order to calculate the inertial index of $\mathcal{L}_{\omega,B}$ it suffices to calculate it for any fixed pair $(\omega_0,B_0)$.
\end{remark}

\begin{remark}\label{obs1.1}
	For a fixed $\omega>0$, we have by Lemma $\ref{lemaper1}$ that $\theta_{\omega,B}=-L_B<0$ for all $B\in(0,B_{\omega})$. Therefore, $in(\mathcal{L}_{\omega,B})=(1,1)$ which means that zero is a simple eigenvalue corresponding to the eigenfunction $h_{\omega,B}'$ and $n(\mathcal{L}_{\omega,B})=1$, where $n(\mathcal{A})$ stands the number of negative eigenvalues of a certain linear operator $\mathcal{A}$. From Floquet theory, we see that the eigenfunction associated to the negative eigenvalue is simple, even and it can be assumed positive over $[0,L]$.
	\end{remark}

\subsection{Spectral Analysis.} Let $h=h_{\omega,B}$ be the periodic solution of $(\ref{ode})$ where $\omega=1-c^2>0$. In
this subsection, we are going to analyze the quantity and multiplicity of the non-positive eigenvalues related to the matrix operator given by

\begin{equation}
\displaystyle\mathcal{L}_{KG}=\left(
\begin{array}{lll}
-\partial_x^2-1+(2k+1)h^{2k}& &c\partial_x\\\\
\ \ \ \ -c\partial_x & & 1
\end{array}\right).
\label{matrixop31}\end{equation}

Operator $\mathcal{L}_{KG}$ in
$(\ref{matrixop31})$, is obtained by considering the conserved
quantities $E$ and $F$ defined in $(\ref{E})$ and $(\ref{F})$ respectively. By defining
$G=E-cF$, one has
$$G'(h,ch')
=E'(h,ch')
-cF'(h,ch')=0,$$ that is,
$(h,ch')$ is a critical point of
$G$. In addition, we have
$
\displaystyle\mathcal{L}_{KG}=G''(h,ch')$ and
$(h',ch'')\in\ker(\mathcal{L}_{KG})$.

\indent Let us consider the quadratic form associated with matrix
operator $(\ref{matrixop31})$,
\begin{equation}\begin{array}{lll}
Q_{KG}(u,v)&=&\displaystyle\langle\mathcal{L}_{KG}(u,v),(u,v)\rangle_X\\\\
&=&\displaystyle\int_{0}^{L}\omega u'^2-u^2+(2k+1)h^{2k}u^2dx+
||c u'-v||_{L_{per}^2}^2\\\\
&=&Q(u)+||c u'-v||_{L_{per}^2}^2,
\end{array}\label{quadraticSG}\end{equation}
where for $\omega=1-c^2>0$, we have that
\begin{equation}
Q(u):=\int_{0}^{L}\omega u'^2-u^2+(2k+1)h^{2k}u^2dx, \label{quadraticDN1}
\end{equation}
represents the quadratic form of the operator $\mathcal{L}_{\omega,B}$ in $(\ref{operator})$.

We have the following results concerning the linearized operator $\mathcal{L}_{KG}$ defined in $(\ref{matrixop31})$.
\begin{proposition}   \label{teoeigenKG}
	Let $\omega>0$ be fixed and consider $k\geq1$ an integer. Let $h=h_{\omega,B}$ be the periodic wave solution determined in Section 2. The
	operator $\mathcal{L}_{KG}$ defined in $L_{per}^2([0,L])\times L_{per}^2([0,L])$, with domain $H_{per}^2([0,L])\times H_{per}^1([0,L])$ has zero as the second eigenvalue which is simple. Moreover, the remainder of the spectrum is a discrete set which is bounded away from zero.
\end{proposition}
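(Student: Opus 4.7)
The plan is to split the analysis of $\mathcal{L}_{KG}$ into three pieces---identifying the kernel, bounding the number of negative eigenvalues, and ensuring the rest of the spectrum stays away from zero---all based on the quadratic-form decomposition $Q_{KG}(u,v)=Q(u)+\|cu'-v\|_{L^2_{per}}^2$ from $(\ref{quadraticSG})$ and the scalar information collected in Remark \ref{obs1.1}. First I would determine $\ker\mathcal{L}_{KG}$ directly: setting $\mathcal{L}_{KG}(u,v)=0$, the second row forces $v=cu'$, and substituting into the first row gives $-(1-c^2)u''-u+(2k+1)h^{2k}u=0$, i.e.\ $\mathcal{L}_{\omega,B}u=0$ (since $\omega=1-c^2$). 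By Remark \ref{obs1.1}, $\ker\mathcal{L}_{\omega,B}=\mathrm{span}\{h'\}$, so $\ker\mathcal{L}_{KG}=\mathrm{span}\{(h',ch'')\}$, and zero is a simple eigenvalue with the expected eigenfunction.

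Next I would count the negative eigenvalues via a min-max argument. Since $\|cu'-v\|_{L^2_{per}}^2\geq 0$, the number of negative directions of $Q_{KG}$ is controlled by that of the scalar form $Q$ of $\mathcal{L}_{\omega,B}$, which equals one by Remark \ref{obs1.1} with simple even positive eigenfunction $\psi_0$. Evaluation at $(\psi_0,c\psi_0')$ yields $Q_{KG}(\psi_0,c\psi_0')=Q(\psi_0)<0$, hence $n(\mathcal{L}_{KG})\geq 1$. For the reverse bound, on the codimension-one subspace $\{(u,v)\in X:\langle u,\psi_0\rangle_{L^2_{per}}=0\}$ the component $u$ lies in the non-negative subspace of $\mathcal{L}_{\omega,B}$, so $Q(u)\geq 0$ and therefore $Q_{KG}(u,v)\geq 0$; the min-max principle then forces $n(\mathcal{L}_{KG})\leq 1$. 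Combining with the kernel computation, zero is the second eigenvalue and is simple.

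To establish that the remainder of the spectrum is a discrete set bounded away from zero, I would carry out a Schur-type reduction of $(\mathcal{L}_{KG}-\lambda I)(u,v)=0$. For $\lambda\neq 1$, the bottom equation yields $v=\frac{c}{1-\lambda}u'$, and substitution in the top equation produces the $\lambda$-family of periodic Hill-type equations $-(\omega-\lambda)u''+(1-\lambda)(-1-\lambda+(2k+1)h^{2k})u=0$. The set of $\lambda$ for which this admits a nontrivial periodic solution is the zero set of an analytic function of $\lambda$ (built from the monodromy matrix of the associated ODE), so it is discrete in any compact set not containing $\lambda=1$; combined with the fact that zero is already a simple eigenvalue of $\mathcal{L}_{KG}$, this shows the neighbouring spectrum is strictly positive.

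The principal obstacle will be the last step: because the reduced equation depends nonlinearly on $\lambda$ in both its principal and zeroth-order coefficients, one cannot read off the spectral gap from a single Hill spectral theorem, and a short analytic-family or implicit-function argument is required to upgrade "simple isolated eigenvalue at $0$" to "rest of the spectrum bounded away from $0$". By contrast, the kernel identification and the negative-direction count are algebraic consequences of the decomposition $(\ref{quadraticSG})$ together with Remark \ref{obs1.1}.
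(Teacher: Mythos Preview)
Your proposal is correct, and the negative-direction count via $(\ref{quadraticSG})$ and Remark~\ref{obs1.1} is exactly what the paper does. Where you diverge is at the two ends of the argument. For the kernel you solve $\mathcal{L}_{KG}(u,v)=0$ directly via the Schur substitution $v=cu'$, reducing to $\mathcal{L}_{\omega,B}u=0$; the paper never does this, instead only noting that $(h',ch'')\in\ker\mathcal{L}_{KG}$ and deducing simplicity \emph{a posteriori} from $\sigma_3>0$. Your route here is cleaner. For the spectral gap, however, the paper avoids your nonlinear-in-$\lambda$ Hill family altogether: it simply reuses the min-max principle one more time, taking the codimension-two subspace orthogonal to $(\upsilon,0)$ and $(h',0)$, on which $Q(u)\geq\overline{\sigma_3}\|u\|_{L^2_{per}}^2$ with $\overline{\sigma_3}>0$ the third eigenvalue of $\mathcal{L}_{\omega,B}$, and hence $Q_{KG}(u,v)\geq\overline{\sigma_3}\|u\|_{L^2_{per}}^2\geq0$ with equality only on the kernel. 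This immediately gives $\sigma_3>0$ without any monodromy or analytic-family machinery, and it sidesteps precisely the obstacle you flag in your final paragraph. So your argument works, but the paper's is shorter and more uniform: the same min-max tool handles $\sigma_1<0$, $\sigma_2=0$, and $\sigma_3>0$ in one stroke.
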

\begin{proof}
	
	In fact, Remark $(\ref{obs1.1})$ enables us to
	say that the operator $\mathcal{L}_{\omega,B}$ in $(\ref{operator})$ has
	exactly one negative eigenvalue which is simple and zero is a simple
	eigenvalue with eigenfunction $h'$. Let $d$ be the
	unique negative eigenvalue of $\mathcal{L}$ with eigenfunction
	$\upsilon$. Since $Q(\upsilon)=d||\upsilon||_{L_{per}^2}^2<0$, we see from
	$(\ref{quadraticSG})$ that $$
	Q_{KG}(\upsilon,c\upsilon')=Q(\upsilon)+\displaystyle
	||c\upsilon'-c\upsilon'||_{L_{per}^2}^2=Q(\upsilon)<0.
	$$
	Moreover, the smallest eigenvalue $\sigma_1$ associated to $\mathcal{L}_{KG}$,
	 is negative. We establish that the next eigenvalue
	of $\mathcal{L}_{KG}$ is $\sigma_2:=0$ (which is simple) and also,
	the third eigenvalue $\sigma_3$, is strictly positive. To show
	these facts, we need to use the min-max characterization of eigenvalues. Indeed, in the energy space $X$, we have
	\begin{equation}
	\sigma_2=\max_{(f_1,f_2)\in X}\min_{(u,v)\in
		X\setminus\{0\}\atop{(u,f_1)_{H_{per}^1}+(v,f_2)_{L_{per}^2}=0}}
	\frac{Q_{KG}(u,v)}{||u||_{H_{per}^1}^2+||v||_{L_{per}^2}^2}.
	\label{minmax2}\end{equation} Then, by considering $f_1=\upsilon$
	and $f_2=0$ we get,
	\begin{equation}
	\sigma_2\geq\min_{(u,v)\in X\setminus\{0\}\atop{(u,\upsilon)_{H_{per}^1}=0}}
	\frac{Q_{KG}(u,v)}{||u||_{H_{per}^1}^2+||v||_{L_{per}^2}^2}\geq0 \label{min2}
	\end{equation}
	and therefore, $\sigma_2=0$. The proof that $\sigma_3>0$ is obtained
	from the same arguments used above when we take the two-dimensional
	subspace spanned by $(\upsilon,0)$ and $(h',0)$ since in this
	case $Q(u)\geq\overline{\sigma_3}||u||_{L_{per}^2}^2$, for
	$u\bot\upsilon$, $u\bot h'$, where $\overline{\sigma_3}$ is the
	third eigenvalue related to $\mathcal{L}$ which is obviously
	positive. Therefore, we conclude that
	$\mathcal{L}_{KG}$ has one negative eigenvalue which is simple and
	zero is a simple eigenvalue with eigenfunction
	$(h',ch'')$ as desired.

\end{proof}

\begin{corollary}   \label{teoeigenKGodd}
	In the same framework of Proposition $\ref{teoeigenKG}$, the linearized operator $\mathcal{L}_{KG}$ at $c=0$ defined in $L_{per,odd}^2([0,L])\times L_{per,odd}^2([0,L])$, with domain $H_{per,odd}^2([0,L])\times H_{per,odd}^1([0,L])$ has no negative eigenvalues and $\ker(\mathcal{L}_{KG})=\{(0,0)\}$. Moreover, the remainder of the spectrum is a discrete set which is bounded away from zero.
\end{corollary}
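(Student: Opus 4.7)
The plan is to exploit the fact that at $c=0$ the operator $\mathcal{L}_{KG}$ becomes block-diagonal, so the spectral problem decouples into one for the scalar Hill operator $\mathcal{L}=\mathcal{L}_{\omega,B}$ (with $\omega=1$) acting on odd functions and one for the identity on $L^2_{per,odd}([0,L])$. The second block has trivial content: its spectrum is $\{1\}$, bounded away from zero with no kernel. So the whole statement reduces to understanding $\mathcal{L}$ restricted to the odd sector.

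First I would invoke Remark 1.1 (which is a consequence of Lemma 2.1 together with Proposition 3.1), which tells us that on the full space $L^2_{per}([0,L])$ the operator $\mathcal{L}$ has exactly one negative eigenvalue, simple, with an \emph{even} eigenfunction $\upsilon$ (this is the Floquet/oscillation statement: the first eigenfunction has no zeros on $[0,L)$, hence is even up to sign after a translation that respects the symmetry of the potential). The kernel of $\mathcal{L}$ on the full space is spanned by $h'$, which is \emph{even} because $h$ is odd by construction in Section 2. Thus both non-positive eigenfunctions of $\mathcal{L}$ lie in the even subspace.

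Next, I would observe that the potential $Q(x)=-1+(2k+1)h(x)^{2k}$ is even in $x$ (since $h$ is odd and $2k$ is even), so $\mathcal{L}$ commutes with the reflection $u(x)\mapsto u(-x)$, and consequently $L^2_{per}([0,L])$ splits as the orthogonal direct sum of the even and odd subspaces, each invariant under $\mathcal{L}$. Because the negative eigenvalue and the zero eigenvalue of $\mathcal{L}$ on the full space are both attained on even eigenfunctions, their restriction to the odd subspace does not see them. Hence $\mathcal{L}|_{L^2_{per,odd}}$ has neither a negative eigenvalue nor zero in its spectrum, and since $\mathcal{L}$ has compact resolvent the rest of its spectrum is discrete and strictly positive, with a spectral gap.

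Combining the two blocks, $\mathcal{L}_{KG}|_{c=0}$ on $L^2_{per,odd}\times L^2_{per,odd}$ has no negative eigenvalue, trivial kernel, and its spectrum is bounded away from zero. The only subtlety — and the step I would be most careful about — is justifying that the kernel of $\mathcal{L}$ on the odd sector is indeed trivial; a priori one might worry about a second linearly independent $L$-periodic solution of $\mathcal{L}u=0$. This is ruled out by Lemma 3.1 combined with Lemma 2.1: since $\theta_{\omega,B}=-L_B<0$, zero is a simple eigenvalue of $\mathcal{L}$ on the whole space, so there is no second periodic solution, odd or otherwise.
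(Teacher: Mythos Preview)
Your proof is correct and follows essentially the same approach as the paper: both arguments rest on the observation that the negative eigenfunction of $\mathcal{L}_{\omega,B}$ (even by Floquet theory, Remark~\ref{obs1.1}) and the kernel eigenfunction $h'$ (even because $h$ is odd) lie in the even sector, so that $\mathcal{L}_{\omega,B}$ restricted to odd functions is strictly positive. The only cosmetic difference is that you pass from the scalar operator to $\mathcal{L}_{KG}$ via the block-diagonal structure at $c=0$, whereas the paper phrases this through the quadratic form $Q_{KG}(u,v)=Q(u)+\|v\|_{L^2_{per}}^2$ and min-max; at $c=0$ these are the same thing.
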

\begin{proof}
Indeed, the linearized operator $\mathcal{L}_{\omega,B}$ at $c=0$ in $(\ref{operator})$ does not have negative eigevalues restricted to the odd sector. In addition, since $h$ is odd and $h'$ is even, one has that $\ker(\mathcal{L}_{\omega,B})=\{0\}$. Therefore, we obtain the existence of $\sigma>0$ such that $Q(u)\geq \sigma||u||_{L_{per}^2}^2$ for all $u\in H_{per,odd}^1([0,L])$. The result then follows by a direct application min-max characterization of eigenvalues and the definition of $Q_{KG}$ given by $(\ref{quadraticSG})$.
\end{proof}

\section{Orbital Stability and Instability of Odd Periodic Waves.}

\indent Since the equation $(\ref{gKG})$ is invariant under translations, we define the orbit generated by $(h,ch')$ as
\begin{equation}\label{orbit}
\Omega_{(h,ch')}=\{(h(\cdot+r),ch'(\cdot+r);\ r\in\mathbb{R}\}.
\end{equation}
In $X$, we introduce the pseudo metric $d$ by
$$d((u_1,v_1),(u_2,v_2))=\inf\{||(u_1,v_1)-(u_2(\cdot+r),v_2(\cdot+r))||_{X},r\in\mathbb{R}\}.$$
It is to be observed that, by definition, the distance between $(u_1,v_1)$ and $(u_2,v_2)$ is measured by the distance between $(u_1,v_1)$ and the orbit generated by $(u_2,v_2)$.

\begin{definition}\label{stadef}
	Let $(h,ch')$ be a traveling wave solution for \eqref{gKG}. We say that $(h,ch')$ is orbitally stable in $X$ provided that, given $\ve>0$, there exists $\delta>0$ with the following property: if $(\phi_0,\phi_1)\in X$ satisfies $\|(\phi_0,\phi_1)-(h,ch')\|_{X}<\delta$ and $(\phi,\psi)$ is solution of \eqref{gKG} in some local interval $[0,T_0)$  with initial condition $(\phi_0,\phi_1)$, then the solution can be continued to a solution in $0\leq t<+\infty$ and satisfies
	$$
	d((\phi(t),\psi(t)),\Omega_{(h,ch')})<\ve, \qquad \mbox{for all}\,\, t\geq0.
	$$
	Otherwise, we say that $(h,ch')$ is orbitally unstable in $X$.
\end{definition}

\indent Definition above establishes that the pair evolution $(\phi,\psi)$ which solves equation $(\ref{gKG})$ must exist in the energy spaces $X$ and $X_{odd}$ for all values of time $t$ in a local interval of time $[0,T_0)$, where $T_0>0$ is the maximal time where the solution exist. This can be done using the standard Kato theory for quasilinear equations in \cite{kato}.

\subsection{Orbital Stability in the Odd Sector}
In this subsection, we are going to prove that the periodic wave $(h,ch')$ determined in Section 2 is stable in the energy space $X_{odd}$ for $c=0$. The main tool is to use the approach contained in \cite{grillakis1}. In fact, according with the Corollary $\ref{teoeigenKGodd}$ we see that 
\begin{equation}\label{quadraticQ}Q(u)=(\mathcal{L}u,u)_{L_{per}^2}\geq \sigma ||u||_{L_{per}^2}^2.\end{equation}
\indent Let $a,b>0$ be arbitrary, since $h$ is a bounded odd periodic wave, we have

\begin{equation}\label{abnorm}\begin{array}{lllll}
a||u_x||_{L_{per}^2}^2+b||u||_{L_{per}^2}^2&\leq a||u_x||_{L_{per}^2}^2+\displaystyle\frac{b}{\sigma}(\mathcal{L}u,u)_{L_{per}^2}\\\\
&\leq \displaystyle \left(a+\frac{b}{\sigma}\omega\right)(\mathcal{L}u,u)+\displaystyle\left(a+\frac{b}{\sigma}\omega+M_k\right)||u||_{L_{per}^2}^2,
\end{array}\end{equation}
where $M_k=(2k+1)\max_{x\in[0,L]} |h(x)|^{2k}$. Thus, 
\begin{equation}\label{abnorm1}\begin{array}{lllll}
\displaystyle a||u_x||_{L_{per}^2}^2+\left(b-a-\frac{b}{\sigma}\omega-M_k\right)||u||_{L_{per}^2}^2
&\leq \displaystyle \left(a+\frac{b}{\sigma}\omega\right)(\mathcal{L}u,u).
\end{array}\end{equation}
Choosing $a,b>0$ such that $b-a-\frac{b}{\sigma}\omega-M_k>0$, we see that
\begin{equation}\label{quadraticQ1}Q(u)=(\mathcal{L}u,u)_{L_{per}^2}\geq \gamma ||u||_{H_{per}^1}^2,\end{equation}
for some $\gamma=\gamma(\sigma,k,a,b,\omega)>0.$ Now, consider $c=0$. From the definition of $Q_{KG}$ in $(\ref{quadraticSG})$, we have
\begin{equation}\label{quadraticQKG}Q(u,v)=\langle\mathcal{L}_{KG}(u,v),(u,v)\rangle_X\geq \gamma ||u||_{H_{per}^1}^2+||v||_{L_{per}^2}^2\geq \widetilde{\gamma}||(u,v)||_{X}^2,\end{equation}
where $\widetilde{\gamma}=\min\{\gamma,1\}>0$. Inequality $(\ref{quadraticQKG})$ is the cornerstone to prove our orbital stability result in the restricted subspace $X_{odd}$ as follows:

\begin{theorem}\label{teostab1}The periodic wave $(h,0)$ is orbitally stable in $X_{odd}$ in the sense of Definition $\ref{stadef}$ (without considering the translation symmetry).
\end{theorem}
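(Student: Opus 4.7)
The strategy is a classical Lyapunov-type stability argument built on the coercivity estimate $(\ref{quadraticQKG})$ and the conservation of the energy $E$. First, I would exploit that the Klein-Gordon equation $(\ref{gKG})$ is invariant under the reflection $x \mapsto -x$, so $X_{odd}$ is preserved by the flow and odd initial data produces odd solutions throughout the existence interval. Moreover, for odd $(\phi,\psi)$ the constraint $F(\phi,\psi) = \int_0^L \phi_x \psi\,dx$ vanishes (since $\phi_x$ is even and $\psi$ is odd), so at $c=0$ the Lyapunov functional $G = E - cF$ reduces to $E$, and $(h,0)$ is a critical point of $E$ whose second variation is $\mathcal{L}_{KG}$.

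Next, I would Taylor expand $E$ about $(h,0)$. Since $E'(h,0)=0$, one obtains
\begin{equation*}
E(h+u,v) - E(h,0) = \tfrac{1}{2}\langle \mathcal{L}_{KG}(u,v),(u,v)\rangle_X + N(u,v),
\end{equation*}
where $N(u,v)$ gathers the cubic and higher-order terms produced by expanding $\tfrac{(h+u)^{2k+2}}{2(k+1)}$. The Sobolev embedding $H_{per}^1([0,L]) \hookrightarrow L_{per}^{\infty}([0,L])$ together with the boundedness of $h$ yields $|N(u,v)| \leq C\|u\|_{H_{per}^1}^3$ whenever $\|u\|_{H_{per}^1} \leq 1$. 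Combining this expansion with $(\ref{quadraticQKG})$ gives the key a priori bound
\begin{equation*}
\|(u,v)\|_X^2 \leq \frac{2}{\widetilde{\gamma}}\bigl[E(h+u,v) - E(h,0)\bigr] + \frac{2C}{\widetilde{\gamma}}\|(u,v)\|_X^3.
\end{equation*}

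To close the argument, I would apply this pointwise in time with $(u(t),v(t)) := (\phi(t) - h, \psi(t))$. Given odd initial data with $\|(\phi_0,\phi_1)-(h,0)\|_X < \delta$, the continuity of $E$ on $X$ gives $|E(\phi_0,\phi_1) - E(h,0)| \leq C_1 \delta^2$, and energy conservation propagates this smallness for every $t$ in the existence interval. A standard continuity argument then shows that, for $\delta$ small enough, $\|(u(t),v(t))\|_X \leq C_2 \delta$ so long as the solution exists; the blow-up alternative from the local well-posedness theory (Kato, as cited in the introduction) upgrades this to a global solution, and any prescribed $\varepsilon$ in Definition $\ref{stadef}$ is achieved by choosing $\delta$ sufficiently small. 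The main subtlety I anticipate is the bootstrap step itself, namely verifying that the ball where the cubic remainder is dominated by the quadratic term remains invariant under the flow and that the odd symmetry is rigorously transferred from the initial data to the solution at every $t$, so that $(\ref{quadraticQKG})$ genuinely applies; once these technicalities are in place, the proof is a direct application of the Lyapunov method of Grillakis--Shatah--Strauss \cite{grillakis1}, specialized to the trivial-constraint situation that is automatic in the odd sector.
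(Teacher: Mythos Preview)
Your proposal is correct and follows essentially the same approach as the paper: both rest on the coercivity estimate $(\ref{quadraticQKG})$ for $\mathcal{L}_{KG}$ in the odd sector and then invoke the Lyapunov method of Grillakis--Shatah--Strauss. The paper simply cites Theorem~3.3 of \cite{grillakis1}, while you unpack that argument explicitly (Taylor expansion of $E$, Sobolev control of the remainder, continuity/bootstrap), and your observation that $F\equiv 0$ on $X_{odd}$ nicely clarifies why no constraint or translational modulation is needed.
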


\begin{proof}
The proof follows by $(\ref{quadraticQKG})$ combined with a direct application of the Theorem 3.3 in \cite{grillakis1}. 
\end{proof}

\subsection{Instability for the $\phi^4$ Equation.} Let $L_0>0$ be fixed. In this subsection, we use an explicit form of the wave $h$ to prove the orbital instability result for the case $k=1$. In fact, using the ansatz 
\begin{equation}h(x)=a{\rm sn}\left(\frac{4K(\kappa)x}{L_0},\kappa\right)
\label{SNsol}\end{equation} into the equation $(\ref{ode})$, one has that $h$ is an explicit periodic solution with 
\begin{equation}\label{vaSN}
a=\frac{\sqrt{2}\kappa}{\sqrt{\kappa^2+1}}.
\end{equation}
The value of $\omega$ can be explicitly determined as
\begin{equation}\label{omegaSN}
\omega=\frac{L_0^2}{16K(\kappa)^2(1+\kappa^2)}.
\end{equation}
\indent Since $\omega=1-c^2$, we see that $c>0$ is given by
\begin{equation}\label{cSN}
c=\frac{\sqrt{16(1+\kappa^2)K(\kappa)^2-L_0^2}}{4K(\kappa)\sqrt{1+\kappa^2}}.
\end{equation}
Let $L_0>0$ be fixed such that $0<\omega<\frac{L_0^2}{4\pi^2}$. There exists a smooth curve $$\omega\in \left(0,\frac{L_0^2}{4\pi^2}\right)\mapsto h_{\omega}$$ of periodic waves which solves the equation $(\ref{ode})$. The existence of a smooth curve with fixed period is crucial in our analysis since the function $d$ in terms of $c$ as $d(c)=E(h,ch')-cF(h,ch')$ is well defined. Next, since $d'(c)=-F(h,ch')=-c\int_0^{L_0}(h'(x))^2dx$, we see that 

\begin{equation}\label{d2}
\begin{array}{lllll}d''(c)
&=&\displaystyle-\int_0^{L_0}(h'(x))^2dx+2c^2\frac{d}{d\omega}\int_0^{L_0}(h'(x))^2dx\\\\
&=&\displaystyle-\int_0^{L_0}(h'(x))^2dx+2(1-\omega)\frac{d}{d\omega}\int_0^{L_0}(h'(x))^2dx.
\end{array}\end{equation}
  On the other hand, using the explicit expression for $(\ref{SNsol})$, we obtain that
\begin{equation}\label{vadseg}
\begin{array}{lllll}d''(c)&=\displaystyle-\int_0^{L_0}(h'(x))^2dx+2(1-\omega)\frac{d}{d\kappa}\int_0^{L_0}(h'(x))^2dx\left(\frac{d\omega}{d\kappa}\right)^{-1}\\\\
&=\displaystyle-\frac{16a^2K(\kappa)}{L_0}\int_0^{K}{\rm cn}^2(x,\kappa){\rm dn}^2(x,\kappa)dx\\\\
&\displaystyle+\frac{32(1-\omega)}{L_0}\frac{d}{d\kappa}\left(a^2K(\kappa)\int_0^K {\rm cn}^2(x,\kappa){\rm dn}^2(x,\kappa)dx\right)\left(\frac{d\omega}{d\kappa}\right)^{-1}.
\end{array}\end{equation}
\indent The values of $a$ and $\omega$ given by $(\ref{vaSN})$ and $(\ref{omegaSN})$, respectively, can be used to get combined with formula 361.03 in \cite{byrd} that

$$
\begin{array}{llllllll}d''(c)&=\displaystyle-\frac{16a^2K(\kappa)}{3\kappa^2L_0}\left((1+\kappa^2)E(\kappa)-(1-\kappa^2)K(\kappa)\right)\\\\
&+\displaystyle\frac{32}{3L_0}(1-\omega)\frac{d}{d\kappa}\left(\frac{a^2K(\kappa)}{\kappa^2}\left((1+\kappa^2)E(\kappa)-(1-\kappa^2)K(\kappa)\right)\right)\left(\frac{d\omega}{d\kappa}\right)^{-1}\\\\
&=-\displaystyle\frac{32}{3(1+\kappa^2)L_0}\left((1+\kappa^2)E(\kappa)-(1-\kappa^2)K(\kappa)\right)\\\\
&+\displaystyle\frac{64}{3L_0}(1-\omega)\frac{d}{dk}\left(\frac{K(\kappa)}{(1+\kappa^2)}\left((1+\kappa^2)E(\kappa)-(1-\kappa^2)K(\kappa)\right)\right)\left(\frac{d}{d\kappa}\left(\frac{1}{K(\kappa)^2(1+\kappa^2)}\right)\right)^{-1}.\\\\
\end{array}$$
\indent Since $p(k):=(1+\kappa^2)E(\kappa)-(1-\kappa^2)K(\kappa)>0$ for all $\kappa\in(0,1)$ and $1-\omega>0$, we only need to study the behavior of the last two terms containing derivatives with respect to $\kappa$. In fact, let us denote
$$q(k):=\frac{d}{dk}\left(\frac{K(\kappa)}{(1+\kappa^2)}\left((1+\kappa^2)E(\kappa)-(1-\kappa^2)K(\kappa)\right)\right)\left(\frac{d}{d\kappa}\left(\frac{1}{K(\kappa)^2(1+\kappa^2)}\right)\right)^{-1}.$$
Clearly, $q$ is a negative function in terms of $k\in(0,1)$, and thus $d''(c)<0$.\\
\indent Analysis above gives the following orbital instability result.

\begin{theorem}\label{teostab2} Let $L_0>0$ be fixed. For 
	$\omega\in\left(0,\frac{L_0^2}{4\pi^2}\right)$, consider the odd periodic solution $h$ given by $(\ref{SNsol})$. The periodic wave $(h,ch')$ is orbitally unstable in $X$ in the sense of Definition $\ref{stadef}$.
\end{theorem}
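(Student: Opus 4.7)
The plan is to verify the hypotheses of the Grillakis-Shatah-Strauss instability theorem (Theorem 3.3 of \cite{grillakis1}) for the Hamiltonian system $(\ref{hamilt31})$, and then reduce the problem to checking the sign of $d''(c)$. Since $J$ in $(\ref{J})$ is invertible with bounded inverse, the GSS criterion applies in its simplest form: orbital instability holds whenever $\mathcal{L}_{KG}=G''(h,ch')$ has exactly one simple negative eigenvalue, a simple zero eigenvalue spanned by the symmetry generator $(h',ch'')$, and $d''(c)<0$.

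First I would confirm that the required smooth branch of periodic waves with fixed period $L_0$ exists. Using $(\ref{omegaSN})$, the map $\kappa\in(0,1)\mapsto \omega(\kappa)=L_0^2/\bigl(16K(\kappa)^2(1+\kappa^2)\bigr)$ is smooth, strictly decreasing, and its image is exactly $(0,L_0^2/(4\pi^2))$ (using $K(0)=\pi/2$ and $K(\kappa)\to\infty$ as $\kappa\to 1^-$), so it defines a smooth curve $\omega\mapsto h_\omega$ of solutions to $(\ref{ode})$ on this interval. Consequently $c=\sqrt{1-\omega}\in(0,1)$ is a smooth function of $\omega$ and $d(c)=G(h,ch')$ is well defined and smooth. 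Second, the spectral hypotheses are already in hand: for every such $\omega$, Proposition \ref{teoeigenKG} (which rests on the monotonicity $L_B>0$ proved in Lemma \ref{lemaper1}, and hence is valid for the snoidal branch at $k=1$) guarantees that $\mathcal{L}_{KG}$ has a single simple negative eigenvalue and that $\ker(\mathcal{L}_{KG})$ is one-dimensional, spanned by $(h',ch'')$.

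Third, I would compute $d''(c)$ via the formula $(\ref{d2})$. Using $h(x)=a\,\sn(4K(\kappa)x/L_0,\kappa)$ and the derivative identity $\sn'=\cn\,\dn$, the integral $\int_0^{L_0}(h')^2\,dx$ reduces to $\frac{16 a^2 K(\kappa)}{L_0}\int_0^K \cn^2\dn^2\,dx$, which by formula 361.03 of \cite{byrd} equals $\frac{16a^2K(\kappa)}{3\kappa^2 L_0}\bigl((1+\kappa^2)E(\kappa)-(1-\kappa^2)K(\kappa)\bigr)$. Substituting $a^2=2\kappa^2/(1+\kappa^2)$ and changing variables from $\omega$ to $\kappa$ via the chain rule, one gets
\[
d''(c)=-\tfrac{32}{3(1+\kappa^2)L_0}\,p(\kappa)+\tfrac{64(1-\omega)}{3L_0}\,q(\kappa),
\]
with $p(\kappa)=(1+\kappa^2)E(\kappa)-(1-\kappa^2)K(\kappa)$ and $q(\kappa)$ the quotient of derivatives displayed in the excerpt. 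The classical inequality $E(\kappa)\geq (1-\kappa^2)K(\kappa)$ shows $p(\kappa)>0$ on $(0,1)$, so the first term is negative; thus everything reduces to proving that $q(\kappa)<0$ on $(0,1)$.

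The main obstacle, and the only nontrivial analytic step, is precisely this sign analysis of $q(\kappa)$. I would proceed using the standard identities $K'(\kappa)=\bigl(E(\kappa)-(1-\kappa^2)K(\kappa)\bigr)/(\kappa(1-\kappa^2))$ and $E'(\kappa)=(E(\kappa)-K(\kappa))/\kappa$ to rewrite both the numerator and the denominator of $q$ as explicit rational combinations of $K$, $E$ and powers of $\kappa$. The denominator $\frac{d}{d\kappa}\bigl(1/(K(\kappa)^2(1+\kappa^2))\bigr)$ is negative on $(0,1)$ because both $K$ and $1+\kappa^2$ are strictly increasing, so it suffices to verify that the numerator $\frac{d}{d\kappa}\!\left(\frac{K}{1+\kappa^2}\,p(\kappa)\right)$ is strictly positive on $(0,1)$. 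This can be done by expanding the derivative and using the monotonicity of $K(\kappa)$ and $p(\kappa)$ (both increasing on $(0,1)$ with $p(0)=0$, $K(0)=\pi/2$), complemented, if necessary, by series expansions near $\kappa=0$ and $\kappa=1$. Once $q(\kappa)<0$ is established, $d''(c)<0$ follows and the instability claim is an immediate application of Theorem 3.3 in \cite{grillakis1}, completing the proof.
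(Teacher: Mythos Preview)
Your proposal is correct and follows essentially the same route as the paper: verify the spectral hypotheses via Proposition~\ref{teoeigenKG}, compute $d''(c)$ from $(\ref{d2})$ using the explicit snoidal profile and formula 361.03 of \cite{byrd}, and reduce to the sign analysis of $p(\kp)$ and $q(\kp)$; the paper simply asserts that $q$ is negative, so your more explicit plan for that step (denominator negative by monotonicity of $K$ and $1+\kp^2$, numerator positive) is in fact more detailed than what the paper provides. One small correction: the GSS result you invoke at the end should be the \emph{instability} theorem (Theorem~4.7 in \cite{grillakis1}, as the paper cites), not Theorem~3.3, which is the stability criterion.
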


\begin{proof}
	By Propositions $\ref{teoisoL}$ and $\ref{teoeigenKG}$, we guarantee the existence of only one negative for $\mathcal{L}_{KG}$ and it results to be simple. In addition, zero is a simple eigenvalue corresponding to the eigenfunction $(h',ch'')$. The result now follows by Theorem 4.7 in \cite{grillakis1}.
\end{proof}

\subsection{Instability for the $\phi^6$ Equation.} This subsection is devoted in presenting the orbital instability associated to the equation $(\ref{gKG})$ for the case $k=2$.\\
\indent First, let us assume $(\ref{ode})$ with a general $k\geq1$ integer. Let $\omega_0>0$ and $B_0\in (0,B_{\omega_0})$ be fixed and consider $h_{\omega_0}$, the corresponding solution for the equation $(\ref{ode})$ with fixed period $L_0>0$. Suppose the existence of a smooth curve $\omega\in \mathcal{I}\mapsto h_{\omega}\in B_r(h_{\omega_0})$ of periodic waves with fixed period solving $(\ref{ode})$. Here, $\mathcal{I}\subset(0,+\infty)$ is an open neighborhood around $\omega_0=1-c_0^2>0$ and $B_r(h_{\omega_0})\subset H_{per}^2([0,L_0])$ is an open neighborhood around $h_{\omega_0}$. Our intention is to obtain a more convenient expression for $d''(c)$ in $(\ref{d2})$ to simplify the management of complicated expressions involving elliptic functions which appears in the case $k=2$. Comparing with the results obtained in the previous subsection for the case $k=1$, we have a considerable simplification of the calculations since we use few information about the solution $h$ in comparison with the expression in $(\ref{d2})$ and $(\ref{vadseg})$.

\indent In fact, let us define $c\in (-1,1)$ such that $c^2=1-\omega$, where $\omega\in\mathcal{I}$ has been determined in the last paragraph. Let $d$ be the function defined as $d(c)=E(h,ch')-cF(h,ch')$, where we again consider $h=h_{\omega}$ to simplify the notation. Since $d'(c)=-F(h,ch')=-c\int_0^{L_0}(h'(x))^2dx$, we see similarly to $(\ref{d2})$ that 

\begin{equation}\label{d21}
d''(c)
=\displaystyle-\int_0^{L_0}(h'(x))^2dx+2(1-\omega)\frac{d}{d\omega}\int_0^{L_0}(h'(x))^2dx.
\end{equation}
\indent We give a convenient expression for $D=\frac{d}{d\omega}\int_0^{L_0}(h'(x))^2dx$. The reason for that is to avoid heavy calculations as determined for the case $k=1$, where we used the explicit form in $(\ref{SNsol})$ to calculate $D$ is terms of the complete elliptic integrals of the first and second kinds. In fact, deriving equation $(\ref{ode})$ with respect to $\omega\in \mathcal{I}$ and multiplying the resulting equation by $h$, we obtain after integration over $[0,L_0]$ that
\begin{equation}\label{eq1}
-\omega\int_0^{L_0}\eta''h dx+\int_0^{L_0}h'^2 dx-\int_0^{L_0}\eta hdx+(2k+1)\int_0^{L_0}h^{2k+1}\eta dx=0,
\end{equation}
where $\eta=\frac{d h}{d \omega}$ and we omit the variable $x$ in $(\ref{eq1})$ to simplify the notation. An integration by parts applied to the first integral in $(\ref{eq1})$ establishes using the ODE $(\ref{ode})$ that
\begin{equation}\label{eq2}
\int_0^{L_0}h'^2dx+\frac{k}{k+1}\frac{d}{d\omega}\int_0^{L_0}h^{2k+2}dx=0.
\end{equation}
The same equation $(\ref{eq1})$ also gives
\begin{equation}\label{eq3}
\frac{\omega}{2}\frac{d}{d\omega}\int_0^{L_0}h'^2 dx+\int_0^{L_0}h'^2 dx-\frac{1}{2}\frac{d}{d\omega}\int_0^{L_0}h^2dx+\frac{2k+1}{2k+2}\frac{d}{d\omega}\int_0^{L_0}h^{2k+2} dx=0.
\end{equation}
\indent On the other hand, multiplying equation $(\ref{ode})$ by $h'$, we obtain after integration over $[0,L_0]$
\begin{equation}\label{eq4}
-\frac{\omega}{2}\int_0^{L_0}h'^2dx-\frac{1}{2}\int_0^{L_0}h^2dx+\frac{1}{2k+2}\int_0^{L_0}h^{2k+2}dx+AL=0,
\end{equation}
 where $A$ is constant of integration depending smoothly on $\omega$. Now, deriving equation $(\ref{eq4})$ with respect to $\omega\in \mathcal{I}$ and combining the final result with $(\ref{eq3})$, we obtain from $(\ref{eq2})$ that
 
 \begin{equation}\label{eq5}
 \omega\frac{d}{d\omega}\int_0^{L_0}h'^2dx-\frac{1}{2}\int_0^{L_0}h'^2dx-L_0\frac{dA}{d\omega}=0
 \end{equation}
 By $(\ref{eq1})$ and $(\ref{eq5})$, we deduce a simple expression for $d''(c)$ as
 \begin{equation}\label{eq6}
 d''(c)=-\frac{1}{\omega}\int_0^{L_0}h'^2dx+\frac{2(1-\omega)L_0}{\omega}\frac{dA}{d\omega}.
 \end{equation}
 \indent It is possible to obtain a convenient expression for $\frac{dA}{d\omega}$. Indeed, multiplying equation $(\ref{ode})$ by $h'$, integrating over $[0,x)$, using the fact that $h$ is odd, and deriving the final result with respect to $\omega$, we have
 \begin{equation}\label{eq7}
 \frac{dA}{d\omega}=\omega h'(0)\eta'(0)+\frac{h'(0)^2}{2}. 
 \end{equation}
 
Gathering $(\ref{eq6})$ and $(\ref{eq7})$, we get the more convenient equality
\begin{equation}\label{eq8}
d''(c)=-\frac{1}{\omega}\int_0^{L_0}h'^2dx+2L_0(1-\omega)h'(0)\eta'(0)+\frac{1-\omega}{\omega}h'^2(0)L_0.
\end{equation}

\indent Let $L_0>0$  be fixed. The explicit solution for the equation $(\ref{ode})$ with $k=2$ is given by

\begin{equation}\label{SNSNsol}
h(x)=\frac{a{\rm sn}\left(\frac{4K(\kappa)x}{L_0},\kappa\right)}{\sqrt{1-b{\rm sn}^2\left(\frac{4K(\kappa)x}{L_0},\kappa\right)}},
\end{equation}
where $a$, $b$ and $\omega$ are complicated functions in terms of $\kp$ given by
\begin{equation}\label{eqaSN}
a=\sqrt [4]{1458}\frac{\sqrt [4]{ \left(  \left( -1-{\kp}^{6}+\frac{3}{2}\,{\kp}^{
				4}+\frac{3}{2}\,{\kp}^{2} \right) \sqrt {s(\kp)}+ \left( s(\kp) \right) ^{2} \right)  \left( s(k) \right) ^{3}}}{{s(k)}},
\end{equation}
\begin{equation}\label{bSNSN}
b=\frac{1}{3}\kp^2+\frac{1}{3}-\frac{1}{3}\sqrt{\kp^4-\kp^2+1},
\end{equation}
and
\begin{equation}\label{omegaSNSN}
\omega=\frac{L_0^2}{16K(\kappa)^2\sqrt{\kp^4-\kp^2+1}},
\end{equation}
where the notation $s(\kp)=\kp^4-\kp^2+1$ in $(\ref{eqaSN})$ was employed to simplify the notation. Let $L_0>0$ be fixed. Equality in $(\ref{omegaSNSN})$ enables us to deduce that $\omega>0$ must be considered over the interval $\left(0,\frac{L_0}{4\pi^2}\right)$.\\
\indent By Propositions $\ref{teoisoL}$ and $\ref{teoeigenKG}$, we see that $\mathcal{L}_{KG}$ has only one negative eigenvalue which is simple, zero is a simple eigenvalue associated to the eigenfunction $(h',ch'')$ and the remainder of the spectrum consists in a discrete set which is bounded away from zero. It remains to calculate $d''(c)$ using the simplified formula in $(\ref{eq8})$. In fact, we see from the chain rule that
\begin{equation}\label{eq9}
d''(c)=-\frac{1}{\omega}\int_0^{L_0}h'^2dx+2L_0(1-\omega)h'(0)\frac{d\eta'(0)}{d\kp}\left(\frac{d\omega}{d\kp}\right)^{-1}+\frac{1-\omega}{\omega}h'^2(0)L_0.
\end{equation}
Consider $\beta(\kp):=2L_0(1-\omega)h'(0)\frac{d\eta'(0)}{d\kp}\left(\frac{d\omega}{d\kp}\right)^{-1}+\frac{1-\omega}{\omega}h'^2(0)L_0$. We are going to prove that $\beta(\kp)<0$ for all $\kp\in (0,1)$. In fact, an exhaustive calculation gives us that

\begin{equation}\label{betakp}
\beta(\kp)=\frac{72\, \left( \kp+1 \right) {\kp}^{2}}{L_0^3} \left( -16\,  {\it K}
\left( \kp \right)^{2}\sqrt {{\kp}^{4}-{\kp}^{2}+1}+L_0 \right) \tau(\kp),
\end{equation}
where $\tau(\kappa)$ is complicated smooth function depending only on $\kappa\in(0,1)$. The Figure $\ref{pic1}$ shows that $\tau(\kp)>0$ for all $\kp\in (0,1)$. 

\begin{figure}[!htb]
		\includegraphics[scale=0.35]{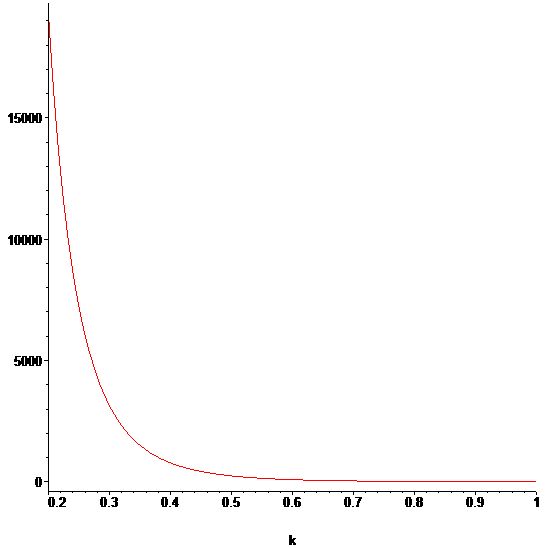}
		\caption{Graphic of $\tau$.}
\label{pic1}\end{figure}
\indent On the other hand, for a fixed $L_0>0$ such that $\omega\left(0,\frac{L_0}{4\pi^2}\right)$, one has $$ -16\,  {\it K}
\left( \kp \right)^{2}\sqrt {{\kp}^{4}-{\kp}^{2}+1}+L_0 <0,\ \ \ \ \forall\ \kp\in(0,1).$$ Therefore $d''(c)<0$ and we are in position to enunciate the following result.

\begin{theorem}\label{teostab3} Let $L_0>0$ be fixed. For 
	$\omega\in\left(0,\frac{L_0^2}{4\pi^2}\right)$, consider the odd periodic solution $h$ given by $(\ref{SNSNsol})$. The periodic wave $(h,ch')$ is orbitally unstable in $X$ in the sense of Definition $\ref{stadef}$.
\end{theorem}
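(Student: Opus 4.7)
The plan is to apply the instability criterion of Grillakis, Shatah, and Strauss (Theorem 4.7 in \cite{grillakis1}) once the two standard hypotheses are verified: (i) the spectral picture of $\mathcal{L}_{KG}$ matches the one negative eigenvalue / simple zero kernel situation required by the abstract theory, and (ii) the convexity function $d(c)$ satisfies $d''(c)<0$. Because the explicit solution $(\ref{SNSNsol})$ with the coefficients in $(\ref{eqaSN})$--$(\ref{omegaSNSN})$ is already at hand, nothing genuinely new has to be set up; what remains is bookkeeping plus one delicate sign check.

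First, I would invoke the existence of the smooth curve $\omega\in\mathcal{I}\mapsto h_\omega$ with fixed period $L_0$ that is tacitly used in the derivation of $(\ref{eq8})$: this comes from the implicit function theorem applied around any $\omega_0=1-c_0^2$ in the admissible range, using the characterization of the period as $L_0=\widetilde{L}(\omega,B)$ and Lemma \ref{lemaper1} to guarantee the regularity of the inverse map. With the curve in place, Proposition \ref{teoisoL} together with Remark \ref{obs1.1} shows $in(\mathcal{L}_{\omega,B})=(1,1)$, and then Proposition \ref{teoeigenKG} upgrades this to the corresponding statement for $\mathcal{L}_{KG}$ on $X$: a single negative simple eigenvalue, a simple zero with eigenfunction $(h',ch'')$, and the rest of the spectrum bounded away from $0$. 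This discharges hypothesis (i).

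Second, I would compute $d''(c)$ via the simplified formula $(\ref{eq8})$, which was derived precisely to avoid handling the complete elliptic integral of the third kind that the direct approach of Section 4.2 would produce for $k=2$. Plugging the explicit form of $h'(0)$ and $\eta'(0)=\frac{d}{d\omega}h'(0)$ obtained from $(\ref{SNSNsol})$ through the chain rule with respect to $\kappa$ and the relation $(\ref{omegaSNSN})$, the expression reorganizes as $(\ref{eq9})$ with $\beta(\kappa)$ given in $(\ref{betakp})$. At this stage the calculation reduces to a sign analysis of three factors of $\beta(\kappa)$: the prefactor $72(\kappa+1)\kappa^2/L_0^3$ is manifestly positive, the factor $-16K(\kappa)^2\sqrt{\kappa^4-\kappa^2+1}+L_0$ is negative precisely because $(\ref{omegaSNSN})$ combined with the hypothesis $\omega\in(0,L_0^2/(4\pi^2))$ forces $L_0<16K(\kappa)^2\sqrt{\kappa^4-\kappa^2+1}$, and the third factor $\tau(\kappa)$ must be shown to be positive on $(0,1)$. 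The combination yields $\beta(\kappa)<0$, and since the remaining summand $-\frac{1}{\omega}\int_0^{L_0}(h')^2\,dx$ in $(\ref{eq9})$ is also negative, one concludes $d''(c)<0$. Hypothesis (ii) then gives the instability via \cite[Thm.~4.7]{grillakis1}.

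The main obstacle is the rigorous verification that $\tau(\kappa)>0$ on $(0,1)$, which the excerpt handles only graphically (Figure \ref{pic1}). I would address this by writing $\tau$ as an explicit polynomial combination of $K(\kappa)$, $E(\kappa)$, $\sqrt{\kappa^4-\kappa^2+1}$ and powers of $\kappa$, checking the limits $\tau(0^+)>0$ and $\tau(1^-)>0$ from the known asymptotics of $K$ and $E$, and ruling out interior zeros either by bounding $\tau$ from below on compact subintervals via monotonicity of each elementary factor or, failing a clean analytic argument, by a validated numerical enclosure. Everything else in the proof is either a direct appeal to the propositions and formulas already established in Sections 2--3 and Subsection 4.2, or an algebraic simplification that is routine once $(\ref{eq8})$ has been used as the starting point.
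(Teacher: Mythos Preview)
Your proposal is correct and follows essentially the same approach as the paper: the spectral hypothesis is discharged via Propositions \ref{teoisoL} and \ref{teoeigenKG}, and the sign of $d''(c)$ is obtained from the simplified formula $(\ref{eq8})$ through the factorization $(\ref{betakp})$, after which Theorem 4.7 of \cite{grillakis1} applies. The only point where you go beyond the paper is that the paper establishes $\tau(\kappa)>0$ solely by reference to the plot in Figure \ref{pic1}, whereas you propose to supply an analytic or validated-numerics argument.
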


 \section*{Acknowledgments}

 The first author dedicates this work to Professor Luciene Parron Gimenes Arantes. F. Natali is partially supported by Funda\c c\~ao Arauc\'aria/Brazil (grant 002/2017) and CNPq/Brazil (grant 304240/2018-4).


\begin{thebibliography}{99}


\bibitem{bonorino} L.P. Bonorino, E.H.M. Brietzke, J.P. Lukaszczyk and C.A. Taschetto, \textit{Properties of the period function for some Hamiltonian systems and homogeneous solutions of a semilinear elliptic equation},
	J. Diff. Equat., 214 (2005), 156--175.

\bibitem{byrd} P.F. Byrd and M.D. Friedman., Handbook of elliptic integrals
	for engineers and scientists, 2nd ed., Springer, NY, (1971).

\bibitem{chicone} C. Chicone, Ordinary Differential Equations with Applications, Springer, New York, (2006).


\bibitem{est} M.S.P Eastham, The Spectral of Differential Equations.
Scottish Academic Press, Edinburgh, (1973).



\bibitem{grillakis1} M. Grillakis, J. Shatah and W. Strauss, \textit{Stability theory of solitary waves in the presence of symmetry
I.}, J. Funct. Anal., 74 (1987), 160-197.


\bibitem{jack} J.K. Hale, Ordinary Differential Equations,
Dover, New York, 1980.


\bibitem{henry} D.B. Henry, J.F. Perez and W.F. Wreszinski, \textit{Stability theory for solitary-wave solutions of scalar field
	equations}, Comm. Math. Phys. 85 (1982), 351–361.

\bibitem{kato} T. Kato, \textit{Quasi-Linear Equations of Evolution with Applications to Partial Differential Equations}, Lecture
Notes in Math., Springer, 448 (1975), 25--70.


\bibitem{martel} M. Kowalczyk, Y. Martel and C. Mu\~noz, \textit{Kink dynamics in the $\phi^4$ model: asymptotic stability for odd perturbations in the energy space}, J. Amer. Math. Soc. 30 (2017), 769-798.

\bibitem{magnus} W. Magnus and S. Winkler, Hill's Equation,
Tracts in Pure and Applied Mathematics,
Wiley, New York, (1966).


\bibitem{martel-merle2} Y. Martel and F. Merle, \textit{Asymptotic stability of solitons for subcritical generalized KdV equations}, Arch. Ration. Mech. Anal., 157 (2001), 219–254.

\bibitem{martel-merle1} Y. Martel and F. Merle, \textit{A Liouville theorem for the critical generalized Korteweg-de Vries equation}, J. Math. Pures
Appl., 79 (2000), 339–425.

\bibitem{natali1} F. Natali and A. Neves, \textit{Orbital stability of solitary waves}, IMA J. Appl. Math.,  79 (2014), 1161-1179.




\bibitem{neves} A. Neves, \textit{Floquet's theorem and stability of periodic solitary
waves}, J. Dyn. Diff. Equat., 21 (2009), 555-565.



\bibitem{neves1} A. Neves, \textit{Isoinertial family of operators and convergence of KdV Cnoidal waves to solitons}, J. Differ. Equat., 244 (2008), 875-886.


\bibitem{palacios} J.M. Palacios, \textit{Orbital stability and instability of periodic wave solutions for the $\varphi^4$-model}, preprint (2020), https://arxiv.org/abs/2005.09523

\end{thebibliography}
\end{document}